\newtheorem{thm}{Theorem}\numberwithin{thm}{section}
\newtheorem{lem}[thm]{Lemma}
\newtheorem{prob}[thm]{Problem}
\newcommand{\Mod}[1]{\, (\mathrm{mod}\ #1)}
\DeclareMathOperator{\li}{li}
\begin{document}
\begin{center}
\huge{Some Diophantine equations involving arithmetic functions and Bhargava factorials}\\[1cm] 
\end{center}
\begin{center}

\large{Daniel M. Baczkowski and Sa$\mathrm{\check{s}}$a Novakovi$\mathrm{\acute{c}}$}\\[0,5cm]
{\small June 2024}\\[0,5cm]
\end{center}
{\small \textbf{Abstract}. 
F. Luca proved for any fixed rational number $\alpha>0$ that the Diophantine equations of the form $\alpha\,m!=f(n!)$, where $f$ is either the Euler function or the divisor sum function or the function counting the number of divisors, have only finitely many integer solutions $(m,n)$. In this paper we generalize the mentioned result and show that Diophantine equations of the form $\alpha\,m_1!\cdots m_r!=f(n!)$ have finitely many integer solutions, too. In addition, we do so by including the case $f$ is the sum of $k$\textsuperscript{th} powers of divisors function. Moreover, we observe that the same holds by replacing some of the factorials with certain examples of Bhargava factorials. }

\begin{center}
\tableofcontents
\end{center}
\section{Introduction}
Diophantine equations involving factorials have a long and rich history. For example Brocard \cite{BR}, and independently Ramanujan \cite{RA}, asked to find all integer solutions for $n!=x^2-1$. Up to now this is an open problem, known as Brocard's problem. It is believed that the equation has only three solutions $(x,n)=(5,4), (11,5)$ and $(71,7)$. Overholt \cite{O} observed that a weak form of Szpiro's-conjecture implies that Brocard's equation has finitely many integer solutions. Using the ABC-conjecture Luca \cite{L} proved that Diophantine equations of the form $n!=P(x)$ with $P(x)\in \mathbb{Z}[x]$ of degree $d\geq 2$ have only finitely many integer solutions with $n>0$. If $P(x)$ is irreducible, Berend and Harmse \cite{BH1} showed unconditionally that $P(x)=H_n$ has finitely many integer solutions where $H_n$ are highly divisible sequences which also include $n!$. Furthermore, they proved that the same is true for certain reducible polynomials. 

Without assuming the ABC-conjecture, Berend and Osgood \cite{BO} showed that for arbitrary $P(x)$ of degree $\geq2$ the density of the set of positive integers $n$ for which there exists an integer $x$ such that $P(x)=n!$ is zero. 
Further progress in this direction is obtained by Bui, Pratt and Zaharescu \cite{BPZ} where the authors give an upper bound on integer solutions $n\leq N$ to $n!=P(x)$. 
Of course, there are several polynomials $P(x)$ for which $P(x)=n!$ is known to have either very few integer solutions or none (see for instance \cite{EO}, \cite{DAB} and \cite{PS}). 
Berndt and Galway \cite{BG} showed that the largest value of $n$ in the range $n<10^9$ for which Brocard's equation $x^2-1=n!$ has a positive integer solution is $n=7$. Matson \cite{MA} extended the range to $n<10^{12}$ and Epstein and Glickman \cite{EG} to $n<10^{15}$. 

Starting from Brocard's problem there have also been studied variations or generalizations of $x^2-1=n!$ (see for instance \cite{DMU}, \cite{KL}, \cite{MU}, \cite{MUT} \cite{TY}). For instance Ulas \cite{MU} studied, among others, Diophantine equations of the form $2^nn!=y^2$ and proved that the Hall conjecture (which is a special case of ABC-conjecture) implies that the equation has only finitely many integer solutions. Note that $2^nn!$ can also be formulated using the notation for the Bhargava factorial $n!_S$. Indeed, it is  $2^nn!=n!_S$, with $S=\{2x+b| x\in \mathbb{Z}\}$. We do not recall the definition of the Bhargava factorial and refer to \cite{BH} or \cite{WT} instead. Other Diophantine equations involving factorials have proved more tractable. For example, Erd\"os and Obl\'ath \cite{EO} proved that $n!\pm m!=x^p$ has only finitely many integer solutions. This result has been generalized to equations of the form $An!+Bm!=P(x)$ where $P(x)$ is a polynomial with rational coefficients of degree $\geq 2$. Luca \cite{L2} proved that if the degree is $\geq 3$, then the ABC-conjecture implies that $An!+Bm!=P(x)$ has finitely many integer solutions, except when $A+B=0$. In this case there are only finitely many solutions with $n\neq m$. For polynomials of the form $P(x)=a_dx^d+a_{d-3}x^{d-3}+\cdots +a_0$ with $a_d\neq 0$ and $d\geq 2$ Gawron \cite{MG} showed that there are finitely many integer solutions to $n!+m!=P(x)$, provided the ABC-conjecture holds. In \cite{NO} and \cite{NOO} the second author studied Diophantine equations of the form $am_1!\cdots m_r!=f(x,y)$ and $An!+Bm!=f(x,y)$, generalizing in this way results of Takeda \cite{WT} and Luca \cite{L2}. More concretely, the second author proved that under certain conditions these equations have finitely many integer solutions as well. The reader interested in results and open problems concerning Diophantine equations involving factorias  should consult Guy's excellent book \cite{GUY}. 

In the present note we are interested in Diophantine equations that are inspired by a paper of Luca \cite{LUC}. To be precise, in \emph{loc. cit.} Luca studied equations of the form 
\begin{center}
	$\alpha\,m!=f(n!)$,
\end{center}
where $\alpha$ is a positive rational number and $f$ either the Euler function $\phi$ or the divisor sum function $\sigma$ or the function counting the number of divisors $\sigma_0$. The main result in \cite{LUC} is the following: 
\begin{thm}[F. Luca]\label{lucaf}
	Let $\alpha$ be a positive rational number and $f$ either the Euler function or the divisor sum function or the function counting the number of divisors. Then the equation
	\begin{eqnarray*}
		\alpha\,m!=f(n!)
		\end{eqnarray*} 
has only finitely many solutions.
\end{thm}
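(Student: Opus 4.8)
The plan is to reduce each of the three cases to elementary estimates on the \emph{size} and on the \emph{prime factorization} of $f(n!)$, drawn from Mertens' theorems and the prime number theorem, and then to show that for all sufficiently large $n$ the number $f(n!)/\alpha$ simply cannot be a factorial. This suffices: for each admissible $n$ the value $f(n!)$ is a fixed integer, and since $m\mapsto m!$ is injective there is at most one $m$ with $m!=f(n!)/\alpha$; hence bounding $n$ bounds the entire solution set $(m,n)$.

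For $f=\phi$ and $f=\sigma$ I would argue purely by size. Writing $\phi(n!)=n!\prod_{p\le n}(1-1/p)$ and $\sigma(n!)=n!\prod_{p\le n}\frac{1-p^{-v_p(n!)-1}}{1-1/p}$, Mertens' third theorem gives $\phi(n!)\asymp n!/\log n$ and $\sigma(n!)\asymp n!\log n$, the correcting product $\prod_{p\le n}(1-p^{-v_p(n!)-1})$ staying bounded away from $0$ and $\infty$. Consequently, for all large $n$ the quotient $\phi(n!)/\alpha$ lies strictly between $(n-1)!=n!/n$ and $n!$, while $\sigma(n!)/\alpha$ lies strictly between $n!$ and $(n+1)!=(n+1)\,n!$. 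In either case no integer $m$ can satisfy $m!=f(n!)/\alpha$, so solutions occur only for bounded $n$.

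The case $f=\sigma_0$ is the genuine obstacle, because $\sigma_0(n!)=\prod_{p\le n}(v_p(n!)+1)$ is far smaller than $n!$ and the ``trapped between consecutive factorials'' argument collapses. Here I would first pin down the sharp order $\log\sigma_0(n!)\asymp n/\log n$: the primes $p\le\sqrt n$ contribute only $O(\sqrt n)$, whereas grouping the primes $p\in(\sqrt n,n]$ by the value $k=\lfloor n/p\rfloor$ and invoking the prime number theorem yields $\sum_{p>\sqrt n}\log(\lfloor n/p\rfloor+1)\sim\Big(\sum_{k\ge1}\tfrac{\log(k+1)}{k(k+1)}\Big)\tfrac{n}{\log n}$. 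Comparing with $\log(m!)\sim m\log m$ and noting that $m\to\infty$ forces $m\log m=O(n/\log n)$, hence $m=o(n/\log n)$.

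The contradiction then comes from the $2$-adic valuation. Every prime $p\in(n/2,n]$ has $v_p(n!)=1$ and therefore contributes a factor $v_p(n!)+1=2$ to $\sigma_0(n!)$, so $v_2(\sigma_0(n!))\ge\pi(n)-\pi(n/2)\asymp n/\log n$. On the other hand $v_2(\alpha\,m!)=v_2(m!)+v_2(\alpha)<m+O(1)=o(n/\log n)$. Since the equation forces $v_2(\sigma_0(n!))=v_2(\alpha\,m!)$, we reach $n/\log n\lesssim o(n/\log n)$, impossible for large $n$; again $n$ is bounded, which finishes all three cases. The analytic heart is the package of Mertens-type and prime-counting asymptotics, needed everywhere; the step I expect to demand the most care is the $\sigma_0$ case, since neither a size estimate nor a single ``largest prime divides one side but not the other'' argument closes it, and one must combine the sharp bound $\log\sigma_0(n!)\asymp n/\log n$ with the $v_2$ computation above.
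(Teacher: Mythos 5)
Your argument is correct, but for two of the three cases it takes a genuinely different route from the one in this paper (whose Theorem~\ref{main} contains Theorem~\ref{lucaf} as the case $r=1$). For $f=\phi$ and $f=\sigma$ you exploit the rigidity of the single factorial on the left: Mertens' theorem traps $\phi(n!)/\alpha$ strictly between $(n-1)!$ and $n!$, and $\sigma(n!)/\alpha$ strictly between $n!$ and $(n+1)!$, once $n$ is large, so neither can equal $m!$. This is short and elementary, but it is precisely the step that does not survive the generalization to $\alpha\,m_1!\cdots m_r!=f(n!)$ treated here: a product of several factorials is not confined near any single factorial, and in the Bhargava setting the cofactor $\beta=\alpha\,s_1^{m_1}\cdots s_r^{m_r}$ grows with the unknowns, so size alone cannot decide the matter. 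That is why Section 3 instead compares the $q$-adic and $q'$-adic valuations of $\prod_{p\le n}(p-1)$ for two primes $q\ne q'$ not dividing $\beta$, via the Siegel--Walfisz input of Lemma~\ref{nuqQ}, and why Section 5 invokes Stewart's lower bound on the greatest prime factor of $2^N-1$ to handle $\sigma_k$ with $k\ge 1$; your squeeze between consecutive factorials buys simplicity for $r=1$ at the cost of all of that generality. For $f=\sigma_0$ your proof and the paper's use the same two ingredients --- a valuation at a small prime (you may take $q=2$ since $\nu_2(\alpha)=O(1)$; the paper must take the first prime not dividing $\beta$) set against the size bound $\log\sigma_0(n!)\ll n/\log n$ --- merely arranged in the opposite order. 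One point to tighten there: the asymptotic you claim for $\sum_{p>\sqrt n}\log(\lfloor n/p\rfloor+1)$ by ``grouping by $k=\lfloor n/p\rfloor$ and invoking the prime number theorem'' requires uniformity in $k$, since the intervals $(n/(k+1),n/k]$ are too short for the prime number theorem once $k$ grows; the upper bound $\ll n/\log n$, which is all you actually use, is obtained in Section 4 by splitting the primes at $n/\log n$ and applying the Brun--Titchmarsh inequality to the short intervals, and your writeup should do the same.
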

Moreover, Luca determined all the solutions for $\alpha=1$. In \cite{BAC} the first author, Filaseta, Luca and Trifonov generalized the results obtained in \cite{LUC} by considering Diophantine equations of the form $bf(n!)=am!$. In particular, it is shown that under certain conditions there are at most finitely many positive integer solutions $(b,n,a,m)$. In view of these results it is natural to study Diophantine equations of the form 
\begin{eqnarray}\label{one}
	\alpha\,m_1!\cdots m_r!=f(n!)
\end{eqnarray} 
where $\alpha$ is a positive rational number and $f$ denotes the functions from above. 
We prove that (\ref{one}) has finitely many solutions for the same functions $f$ and also $f=\sigma_k$ for every integer $k\ge 1$. Here for a positive integer $N$, we use $\sigma_k(N)$ to denote the sum of the $k$\textsuperscript{th} power of positive divisors of $N$ (e.g. $\sigma=\sigma_1$).  Furthermore, we prove a more general result with some of the factorials replaced by more general Bhargava factorials.  The main result of this paper is the following theorem.


\begin{thm}\label{main}
Let $\alpha$ be a positive rational number and fix a natural number $r$. Let $f\in\{\phi,\sigma_k\}$ with $k\ge 0$ an integer.  Fix integers $s_i \ge 1$ and $t_i$ for each $1\le i\le r$, and let $S_i=\{s_ix+t_i|x\in\mathbb{Z}\}$.  Let $T=\{x^2|x\in\mathbb{Z}\}$.  Then the Diophantine equations
	\begin{align*}
		\alpha\, m_1!\cdots m_r! &= f(n!) \\
		\alpha\, m_1!_{S_1}\cdots m_r!_{S_r} &= f(n!) \\
		\alpha\, m_1!_{T}\cdots m_r!_{T} &= f(n!) 
	\end{align*}
have only finitely many solutions $(n,m_1,...,m_r)$.
\end{thm}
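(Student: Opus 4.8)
The plan is to reduce the three equations to one shape and then to separate cases according to $f$. I would first dispose of the Bhargava factorials by their explicit evaluations: for an arithmetic progression $S_i=\{s_ix+t_i\}$ one has $m!_{S_i}=s_i^{\,m}\,m!$ (independent of $t_i$, as in the paper's remark $2^nn!=n!_{\{2x+b\}}$), and for the squares $T$ one has $m!_T=\tfrac12(2m)!$ for $m\ge1$ (see \cite{BH},\cite{WT}). Substituting turns the second equation into $\alpha\bigl(\prod_i s_i^{\,m_i}\bigr)m_1!\cdots m_r!=f(n!)$ and the third into $(\alpha/2^r)(2m_1)!\cdots(2m_r)!=f(n!)$; the squares case is literally the first equation with $m_i$ replaced by $2m_i$, while the factor $\prod_i s_i^{\,m_i}$ is supported on the fixed finite set of primes dividing the $s_i$, is a unit at every prime exceeding $\max_i s_i$, and in size contributes at most $C^{\sum m_i}$. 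Hence it suffices to treat $\beta\prod_i c_i^{\,m_i} m_i!=f(n!)$ for fixed $\beta\in\mathbb Q_{>0}$ and fixed integers $c_i\ge1$. Writing $S=\sum_i m_i$ and $M=\max_i m_i$, I assume a solution with $n$ large and seek a contradiction, using Legendre's formula $v_\ell(m!)=\frac{m-s_\ell(m)}{\ell-1}$, so that $v_\ell\bigl(\prod_i c_i^{m_i}m_i!\bigr)=\frac{S}{\ell-1}+\sum_i m_i v_\ell(c_i)+O(r\log n)$.

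The decisive case is $f=\phi$, where the key idea is to \emph{over-determine} $S$ by comparing two prime-adic valuations. Since $\phi(n!)=\prod_{p\le n}p^{\,v_p(n!)-1}(p-1)$, for a prime $\ell$ one has $v_\ell(\phi(n!))=\bigl(v_\ell(n!)-1\bigr)+\sum_{p\le n}v_\ell(p-1)$, and by Siegel--Walfisz $\sum_{p\le n}v_\ell(p-1)=\sum_{j\ge1}\#\{p\le n:\ell^{\,j}\mid p-1\}=\frac{\ell}{(\ell-1)^2}\pi(n)(1+o(1))$. Fixing two primes $\ell_1\neq\ell_2$ both coprime to every $c_i$, equating $v_{\ell_a}$ of the two sides and solving for $S$ gives, for $a=1,2$,
\begin{equation*}
S=n+(\ell_a-1)\sum_{p\le n}v_{\ell_a}(p-1)+O(r\log n)=n+\tfrac{\ell_a}{\ell_a-1}\,\pi(n)(1+o(1))+O(r\log n),
\end{equation*}
the leading $n$ coming from $(\ell_a-1)v_{\ell_a}(n!)=n-s_{\ell_a}(n)$. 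Subtracting the two expressions, the terms $n$ and $S$ cancel and one is left with $\bigl(\tfrac{\ell_1}{\ell_1-1}-\tfrac{\ell_2}{\ell_2-1}\bigr)\pi(n)(1+o(1))=O(r\log n)$. Since $x\mapsto\frac{x}{x-1}$ is injective, the left side has order $\pi(n)\asymp n/\log n$, which dwarfs $O(r\log n)$ for $n$ large; this is the desired contradiction, valid for \emph{every} fixed $r$. The point is that $\sum_{p}v_\ell(p-1)\asymp n/\log n$ is large enough to be seen against the error $O(r\log n)$ yet too small to disturb the common main term $n$.

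For $f=\sigma_k$ with $k\ge1$ I would instead play size against the $2$-adic valuation. From $\sigma_k(N)\ge N^{k}$ one gets $\prod_i c_i^{m_i}m_i!\ge (n!)^k/\beta$; since $\prod_i m_i!\le (M!)^r$ and $\prod_i c_i^{m_i}\le C^{rM}$, taking logarithms forces $M\ge c\,n$ for some $c=c(k,r)>0$, whence $v_2\bigl(\prod_i c_i^{m_i}m_i!\bigr)\ge v_2(M!)=M-s_2(M)\ge\tfrac{c}{2}n$. On the other hand $v_2(\sigma_k(n!))=\sum_{p\le n}v_2\bigl(\tfrac{p^{k(v_p(n!)+1)}-1}{p^{k}-1}\bigr)$, and the lifting-the-exponent formula bounds each odd-$p$ summand by $v_2(p+1)+v_2(v_p(n!)+1)+\log_2k+O(1)$; summing and using $\sum_{p\le n}v_2(p+1)\asymp\pi(n)$ together with $\sum_{p\le n}v_2(v_p(n!)+1)\le\log_2\sigma_0(n!)\asymp n/\log n$ gives $v_2(\sigma_k(n!))=O(n/\log n)$. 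Thus $\tfrac c2 n\le v_2(\text{LHS})=O(n/\log n)$ is impossible for large $n$. The case $f=\sigma_0$ is the reverse extreme: $\log\sigma_0(n!)\asymp n/\log n$ and $\pi(n)-\pi(n/2)\le v_2(\sigma_0(n!))\le\log_2\sigma_0(n!)\asymp n/\log n$, so the $2$-adic equation forces $S\asymp n/\log n$ while the size equation forces $\sum_i m_i\log m_i\asymp n/\log n$; but $M\ge S/r$ gives $\sum_i m_i\log m_i\ge M\log M\asymp n/r$, so $n/r\ll n/\log n$ is forced, bounding $n$ in terms of the fixed $r$.

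The main obstacle I anticipate is the uniform control of the arithmetic estimates rather than the overall strategy: making the Siegel--Walfisz asymptotic for $\sum_{p\le n}v_\ell(p-1)$ effective with an error term genuinely smaller than $\pi(n)$, and --- the most delicate new point --- proving $v_2(\sigma_k(n!))=O(n/\log n)$, which rests on the lifting-the-exponent case analysis and on the (perhaps surprising) inequality $\sum_{p\le n}v_2(v_p(n!)+1)\le\log_2\sigma_0(n!)$. Assembling these with the elementary size comparisons, and checking that the harmless factors $c_i^{m_i}$ and the substitution $m_i\mapsto2m_i$ never interfere with the chosen primes $\ell_1,\ell_2$, then settles all three equations for every fixed $r$.
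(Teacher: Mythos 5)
Your reduction of the Bhargava-factorial equations via $m!_{S_i}=s_i^m m!$ and $m!_T=\tfrac12(2m)!$ is exactly the paper's, and your $f=\phi$ argument is essentially the paper's proof in different clothing: comparing the $\ell_1$- and $\ell_2$-adic valuations for two primes coprime to the fixed data, so that the main terms $n$ and $\sum_i m_i$ cancel and one is left with $\bigl(\tfrac{\ell_1}{\ell_1-1}-\tfrac{\ell_2}{\ell_2-1}\bigr)\pi(n)\ll r\log n$, is precisely what the paper does with $(q-1)\nu_q(\mathcal{Q})-(q'-1)\nu_{q'}(\mathcal{Q})$ and its Lemma on $\nu_q\bigl(\prod_{p\le n}(p-1)\bigr)$. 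The one thing you defer --- making the Siegel--Walfisz sum $\sum_{p\le n}v_\ell(p-1)$ carry an error genuinely $o(\pi(n))$ despite moduli $\ell^j$ exceeding $(\log n)^A$ --- is exactly the content of the paper's Lemma~\ref{nuqQ}, proved by truncating at $j\le A\log_\ell\log n$ and bounding the tail trivially by $n/\ell^j$; you correctly identify this as the point needing care. Your $\sigma_0$ case likewise uses the paper's two ingredients (a valuation lower bound from primes in $(n/2,n]$, and $\log\sigma_0(n!)\ll n/\log n$), though you assert the latter; note the trivial bound is only $\ll n$, and the improvement to $n/\log n$ requires the Brun--Titchmarsh partition of $(n/L,n]$ into the intervals $I_\ell$ that the paper carries out, so this is a real (but available) ingredient, not a throwaway.

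Where you genuinely diverge is $\sigma_k$ with $k\ge 1$. The paper invokes Stewart's deep theorem on the greatest prime factor of $2^N-1$ to force $m_r$ to exceed $n^{1+o(1)}\exp\{\log n/(104\log\log n)\}$ and then contradicts $\log\sigma_k(n!)\ll n\log n$. You instead use $\sigma_k(n!)\ge (n!)^k$ to force $M=\max m_i\gg_{k,r} n$, hence $v_2(\mathrm{LHS})\ge v_2(M!)\gg n$, against $v_2(\sigma_k(n!))\ll n/\log n$ obtained from lifting-the-exponent: each odd-$p$ factor contributes at most $v_2(p+1)+v_2(\nu_p(n!)+1)+O_k(1)$, and both $\sum_{p\le n}v_2(p+1)$ (by the same Siegel--Walfisz/Brun--Titchmarsh splitting as Lemma~\ref{nuqQ}, applied to $p\equiv-1\Mod{2^j}$) and $\sum_{p\le n}v_2(\nu_p(n!)+1)\le\log_2\sigma_0(n!)$ are $\ll n/\log n$. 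I checked the LTE case analysis and the size argument pinning $\log M\asymp\log n$; the route is sound. What it buys is the elimination of Stewart's theorem in favour of estimates already needed elsewhere in the paper, at the cost of a more intricate $2$-adic computation; what the paper's route buys is a two-line finish once Stewart is granted. Either way the conclusion and the handling of the fixed prime support of $\alpha\,s_1^{m_1}\cdots s_r^{m_r}$ are correct.
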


\noindent We pose the related general problem below.
\begin{prob}
	\textnormal{Let $S,S_i\subset \mathbb{Z}$ be infinite subsets and $\alpha>0$ a rational number. Investigate the set of solutions $(n,m_1,...,m_r)$ of}
	\begin{center}
	$\alpha\prod_{i=1}^r m_i!_{S_i}=f(n!_S)$.
\end{center}
\end{prob}

\section{Preliminaries}
Throughout this work we let $p$ and $q$ denote primes.  We let $\log_q n = (\log n)/(\log q)$ denote the base $q$ logarithm.  For a prime $q$ and integer $N \ge 2$, we let $\nu_q(N)$ denote the  exponent of $q$ in the prime factorization of $N$.  We use the convention that $\nu_q(1)=0$. For a rational number $a/b$, we let $\nu_q(a/b) = \nu_q(a) - \nu_q(b)$.  When we say a prime $q$ does not divide a rational number $\alpha$, we mean $\nu_q(\alpha)=0$.

We use Vinogradov notation $\ll$ and $\gg$.  By $f\ll g$, we mean $f=O(g)$ (i.e., there exists a constant $C$ such that $f \le C g$), and by $f\gg g$, we mean $g=O(f)$.  We use $f\approx g$ to mean $f\ll g$ and $f\gg g$.  For example, $\nu_q(N!) \approx N/q$ with $q$ fixed.  We use $f\sim g$ to denote $f$ is asymptotic to $g$.  

It is known that $\nu_p(N!) = (N-s_p(N))/(p-1)$ where $s_p(N)$ denotes the sum of the base-$p$ digits of $N$ (cf.~\cite{bach}).  Since $1 \le s_p(n) \le (p-1) \lfloor \log_p n\rfloor$ for $p\le n$ and $\nu_p(N!)=0$ for $p>N$, we have the following result that we will use repeatedly.

\begin{lem}\label{n!exp}
Let $N$ be a positive integer.  For any prime $p\le N$,
\[
\nu_p(N!) = \begin{cases}
 \frac{N}{p-1} + O( \log_p N ) & \text{if}~p\le N \\
 \frac{N}{p-1} + O( 1 ) & \text{if}~p> N \\
 \end{cases}
\]
\end{lem}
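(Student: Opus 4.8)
The plan is to deduce both cases directly from Legendre's formula $\nu_p(N!) = (N - s_p(N))/(p-1)$, recalled just above the statement, together with the cited bounds on the base-$p$ digit sum $s_p(N)$. Since that formula is an exact identity, the only work is to separate off the main term $N/(p-1)$ and control the contribution of $s_p(N)$ in each regime. (I note in passing that the hypothesis in the statement should read ``for any prime $p$'', the two displayed cases then covering $p\le N$ and $p>N$, respectively.)

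First I would treat the case $p\le N$. Writing the exact formula as
\[
\nu_p(N!) = \frac{N}{p-1} - \frac{s_p(N)}{p-1},
\]
I would estimate the second summand using the inequalities $1 \le s_p(N) \le (p-1)\lfloor \log_p N\rfloor$ that hold for $p\le N$. Dividing through by $p-1$ gives
\[
\frac{1}{p-1} \le \frac{s_p(N)}{p-1} \le \lfloor \log_p N \rfloor \le \log_p N,
\]
so the error term satisfies $s_p(N)/(p-1) = O(\log_p N)$, and the first case follows at once.

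For the case $p>N$ I would argue more simply. Here no factor among $1,2,\dots,N$ is divisible by $p$, so $\nu_p(N!)=0$ (this is also consistent with Legendre's formula, since the base-$p$ expansion of $N<p$ is the single digit $N$, whence $s_p(N)=N$ and $(N-s_p(N))/(p-1)=0$). It therefore suffices to check that the claimed main term $N/(p-1)$ is itself $O(1)$: because $p$ is a prime exceeding the integer $N$ we have $p\ge N+1$, hence $p-1\ge N$ and $0 < N/(p-1) \le 1$. Consequently $\nu_p(N!) - N/(p-1) = -N/(p-1) = O(1)$, as required.

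Since both estimates reduce to elementary inequalities once Legendre's formula and the digit-sum bounds are available, I do not expect any genuine obstacle here; the statement is a convenient packaging of well-known facts that will be invoked repeatedly later. The only point deserving care is the bookkeeping of the two regimes $p\le N$ and $p>N$, ensuring the error term is correctly recorded as $O(\log_p N)$ in the former and $O(1)$ in the latter.
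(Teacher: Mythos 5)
Your proof is correct and follows exactly the paper's (implicit) argument: the lemma is stated as an immediate consequence of Legendre's formula $\nu_p(N!)=(N-s_p(N))/(p-1)$ together with the bounds $1\le s_p(N)\le (p-1)\lfloor\log_p N\rfloor$ for $p\le N$ and $\nu_p(N!)=0$ for $p>N$, which is precisely what you carry out. You are also right to flag the typo in the hypothesis: the condition ``$p\le N$'' should be dropped (or read as ``for any prime $p$''), since the two displayed cases already distinguish $p\le N$ from $p>N$.
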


Furthermore, let $I_\ell = (n/(\ell+1),n/\ell]$ for any $\ell \ge 1$.  Note for $p>\sqrt{n}$ that $\nu_p(n!) = \lfloor n/p \rfloor$.  Thus, we deduce the following results, which will be used frequently within our work.

\begin{lem}\label{nup}
Let $n,\ell\ge 1$ be positive integers.  Let $I_\ell = (n/(\ell+1),n/\ell]$.  If $\ell+1 \le \sqrt{n}$, then $\nu_p(n!) = \ell$ for every $p\in I_\ell$. 
\end{lem}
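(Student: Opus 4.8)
The plan is to reduce everything to Legendre's formula $\nu_p(n!) = \sum_{i \ge 1} \lfloor n/p^i \rfloor$ together with the two defining inequalities of the interval $I_\ell$. The single observation that makes the argument work is that the hypothesis $\ell+1 \le \sqrt{n}$ is precisely what forces every prime $p \in I_\ell$ to exceed $\sqrt{n}$, so that only the first term of Legendre's sum survives. This is exactly the reduction recorded just before the statement, namely that $\nu_p(n!) = \lfloor n/p\rfloor$ whenever $p > \sqrt{n}$.

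First I would fix a prime $p \in I_\ell$, so that $n/(\ell+1) < p \le n/\ell$. Combining the lower bound $p > n/(\ell+1)$ with the hypothesis $\ell+1 \le \sqrt{n}$ gives
\[
p > \frac{n}{\ell+1} \ge \frac{n}{\sqrt{n}} = \sqrt{n},
\]
hence $p^2 > n$. Consequently $\lfloor n/p^i\rfloor = 0$ for all $i \ge 2$, and Legendre's formula collapses to $\nu_p(n!) = \lfloor n/p\rfloor$.

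It then remains only to evaluate $\lfloor n/p\rfloor$. The inequalities $n/(\ell+1) < p \le n/\ell$ invert to $\ell \le n/p < \ell+1$, so that $\lfloor n/p\rfloor = \ell$. Chaining this with the previous display yields $\nu_p(n!) = \ell$ for every $p \in I_\ell$, which is the assertion.

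There is no serious obstacle here; the content is entirely bookkeeping, and the lemma is really a packaging of Legendre's formula for the convenience of later arguments. The only point deserving care is verifying the \emph{strict} inequality $p > \sqrt{n}$ (rather than merely $p \ge \sqrt{n}$), since this is what rules out the degenerate case $p^2 = n$ and thereby guarantees the vanishing of the higher-order terms; the hypothesis $\ell+1 \le \sqrt{n}$ is tailored exactly to this purpose.
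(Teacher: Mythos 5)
Your proof is correct and follows exactly the route the paper intends: the hypothesis $\ell+1\le\sqrt{n}$ forces $p>\sqrt{n}$ for $p\in I_\ell$, so Legendre's formula collapses to $\nu_p(n!)=\lfloor n/p\rfloor$, and the interval inequalities give $\lfloor n/p\rfloor=\ell$. The paper leaves the lemma without a written proof, merely noting the fact $\nu_p(n!)=\lfloor n/p\rfloor$ for $p>\sqrt{n}$ just before the statement, so your write-up simply supplies the same bookkeeping in full.
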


The following lemma is a consequence of the Siegel-Walfisz theorem~(e.g., see Corollary 11.21 of \cite{mv}).  It is used to prove Lemma~\ref{nuqQ}. 

\begin{lem}\label{siegel}
Let $A>0$ be given.  There exists a constant $c>0$ such that if $h\le (\log x)^A$ and $\gcd(a,h)=1$, then
\[
\pi(x;h,a) \,:= \!\! \sum_{p\le x\atop p\equiv a\Mod{h}} \!\!1 ~=~ \frac{\li(x)}{\phi(h)} + O\Big( \frac{x}{\exp(c\sqrt{\log x} )} \Big)
\]
where $\li(x) = \int_2^x \frac{du}{\log u}$ denotes the logarithmic integral.
\end{lem}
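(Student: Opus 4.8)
The plan is to recognize Lemma~\ref{siegel} as the Siegel–Walfisz theorem and to derive it by the standard analytic route, first for the Chebyshev-type function $\psi(x;h,a) = \sum_{n\le x,\, n\equiv a\,(h)} \Lambda(n)$ and then transferring to $\pi(x;h,a)$ by partial summation. The starting point is the orthogonality of Dirichlet characters modulo $h$, which gives
\[
\psi(x;h,a) = \frac{1}{\phi(h)} \sum_{\chi \bmod h} \overline{\chi}(a)\, \psi(x,\chi), \qquad \psi(x,\chi) = \sum_{n\le x} \Lambda(n)\chi(n).
\]
The principal character $\chi_0$ contributes $\psi(x,\chi_0) = \psi(x) + O((\log h)(\log x))$, and by the prime number theorem $\psi(x) = x + O(x\exp(-c_0\sqrt{\log x}))$, producing the main term $x/\phi(h)$. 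Everything therefore reduces to showing that each non-principal character contributes at most $O(x\exp(-c\sqrt{\log x}))$, uniformly for $h \le (\log x)^A$.

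For a non-principal $\chi$ I would invoke the explicit formula obtained from Perron's formula and a contour shift,
\[
\psi(x,\chi) = -\sum_{|\gamma|\le T}\frac{x^{\rho}}{\rho} + O\!\left(\frac{x(\log qx)^2}{T}\right),
\]
where $\rho = \beta + i\gamma$ runs over the nontrivial zeros of $L(s,\chi)$ and $q$ is the conductor of $\chi$. The contribution of the zeros is controlled by two inputs. The first is the classical zero-free region: there is an absolute constant $c_1>0$ so that $L(s,\chi)\neq 0$ for $\sigma \ge 1 - c_1/\log(q(|t|+2))$, with the single possible exception of one simple real zero $\beta_1$ (a Siegel zero) attached to at most one real character. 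Feeding the bound $\beta \le 1 - c_1/\log(qT)$ into the explicit formula and optimizing $T$ (taking $\log T \approx \sqrt{\log x}$) bounds the non-exceptional zeros by $O(x\exp(-c\sqrt{\log x}))$ once $q \le h \le (\log x)^A$, since then $\log q \ll \log\log x$ is negligible against $\sqrt{\log x}$.

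The main obstacle, and the genuinely deep ingredient, is controlling a possible Siegel zero $\beta_1$. Here I would appeal to Siegel's theorem: for every $\varepsilon>0$ there is an \emph{ineffective} constant $C(\varepsilon)>0$ such that any real zero of $L(s,\chi)$ with real $\chi$ modulo $q$ satisfies $\beta_1 < 1 - C(\varepsilon) q^{-\varepsilon}$. Choosing $\varepsilon = 1/(2A)$ and using $q \le (\log x)^A$ gives $1-\beta_1 \gg (\log x)^{-1/2}$, so that the exceptional term $x^{\beta_1}/\beta_1$ is also $O(x\exp(-c\sqrt{\log x}))$; it is precisely this step that renders the constant $c$ ineffective. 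Finally I would pass from $\psi$ to $\pi$ via $\theta$ by partial summation (the prime-power terms contributing only $O(\sqrt{x})$), which replaces the main term $x/\phi(h)$ by $\li(x)/\phi(h)$ while preserving an error of the same shape, yielding the stated estimate. Since this is the classical Siegel–Walfisz theorem, in the paper itself it suffices to cite Corollary~11.21 of \cite{mv}.
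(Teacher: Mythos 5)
Your proposal is correct and is in essence the same as the paper's treatment: the paper gives no proof of Lemma~\ref{siegel}, simply citing Corollary~11.21 of \cite{mv}, and your sketch (character orthogonality, explicit formula with the classical zero-free region, Siegel's theorem for the exceptional zero, then partial summation from $\psi$ to $\pi$) is precisely the standard argument carried out in that reference, as you yourself note in closing. All the steps, including the choice $\varepsilon=1/(2A)$ making $1-\beta_1\gg(\log x)^{-1/2}$ and the resulting ineffectivity of $c$, are accurate.
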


We will also make use of the next result, which provides an estimate for the exponents in the prime factorization of the product $\prod_{p\le n} (p-1)$.

\begin{lem}\label{nuqQ}
Let $A>0$ be given.  For any prime $q \le (\log n)^A$, we have
\[
\nu_q\Big( \prod_{p\le n} (p-1) \Big) =  \frac{q}{(q-1)^2}  \li(n) + O\Big( \frac{n}{(\log n)^A} \Big).
\]
\end{lem}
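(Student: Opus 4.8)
The plan is to convert the $q$-adic valuation of the product into a sum of prime-counting functions in arithmetic progressions and then apply Siegel--Walfisz (Lemma~\ref{siegel}). First I would write
\[
\nu_q\Big(\prod_{p\le n}(p-1)\Big)=\sum_{p\le n}\nu_q(p-1)=\sum_{j\ge1}\#\{p\le n:q^j\mid p-1\}=\sum_{j\ge1}\pi(n;q^j,1),
\]
since $\nu_q(p-1)$ equals the number of $j\ge1$ with $q^j\mid p-1$, and $q^j\mid p-1$ is the same condition as $p\equiv1\Mod{q^j}$. As $\gcd(1,q^j)=1$ for all $j$, every term is a candidate for Lemma~\ref{siegel}, the only restriction being that the modulus $q^j$ must not exceed a fixed power of $\log n$.

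The decisive step is to truncate at $J=\lfloor A\log_q\log n\rfloor$, chosen so that $q^J\le(\log n)^A<q^{J+1}$. For $j\le J$ the modulus satisfies $q^j\le(\log n)^A$, so Lemma~\ref{siegel} yields $\pi(n;q^j,1)=\li(n)/\phi(q^j)+O(n/\exp(c\sqrt{\log n}))$. Summing over $j\le J$, using $\phi(q^j)=q^{j-1}(q-1)$ together with the geometric series
\[
\sum_{j\ge1}\frac{1}{\phi(q^j)}=\frac{1}{q-1}\sum_{j\ge1}q^{-(j-1)}=\frac{q}{(q-1)^2},
\]
recovers the main term $\frac{q}{(q-1)^2}\li(n)$. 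I would then control the two errors this introduces. The geometric tail equals $\sum_{j>J}1/\phi(q^j)=q^{1-J}/(q-1)^2\ll(\log n)^{-A}$, where the implied constant is absolute because $q^{1-J}\le q^2(\log n)^{-A}$ and $q^2/(q-1)^2\le4$; multiplying by $\li(n)\ll n/\log n$ shows its contribution is $O(n/(\log n)^{A+1})$. The accumulated Siegel--Walfisz error is $O(J\,n/\exp(c\sqrt{\log n}))$, and since $J\ll\log\log n$ whereas $\exp(c\sqrt{\log n})$ outgrows every power of $\log n$, this term is negligible against $n/(\log n)^A$.

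It remains to bound the tail $j>J$, where Lemma~\ref{siegel} no longer applies because $q^j>(\log n)^A$. Here a trivial count suffices: any prime $p\le n$ with $p\equiv1\Mod{q^j}$ lies in $\{1+q^j,1+2q^j,\dots\}$, so $\pi(n;q^j,1)\le n/q^j$, and the terms vanish once $q^j\ge n$. Summing the resulting geometric series gives
\[
\sum_{j>J}\pi(n;q^j,1)\le\sum_{j>J}\frac{n}{q^j}=\frac{n\,q^{-J}}{q-1}\ll\frac{n}{(\log n)^A},
\]
again with an absolute implied constant; this is exactly the order of the stated error and is in fact its dominant source. Adding the three contributions proves the lemma. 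The step I expect to be the main obstacle is the calibration of the cut-off $J$: it must be large enough that the geometric tail of the main term is suppressed, small enough that $q^j\le(\log n)^A$ keeps every term with $j\le J$ inside the Siegel--Walfisz range, and it must satisfy $J\ll\log\log n$ so that the error accumulated over the $j\le J$ terms remains harmless.
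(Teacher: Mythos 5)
Your proposal is correct and follows essentially the same route as the paper: decompose $\nu_q\big(\prod_{p\le n}(p-1)\big)$ as $\sum_j \pi(n;q^j,1)$, truncate at $J=\lfloor A\log_q\log n\rfloor$, apply Lemma~\ref{siegel} for $j\le J$ with the geometric series giving the main term $\frac{q}{(q-1)^2}\li(n)$, and bound the tail $j>J$ trivially by $n/q^j$. The calibration of $J$ and the three error estimates match the paper's argument in substance.
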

\begin{proof}
Let $A>0$ be given, and fix a prime $q \le (\log n)^A$.  Let $J=\lfloor A\log_q( \log n) \rfloor$, and notice that $J\ge 1$.  For some prime $p\le n$, observe that $q^j \mid (p-1)$ if and only if $p\equiv 1\Mod{q^j}$.  Moreover, as $p-1<n$, note that $1\le j\le \log_q n$.  We proceed by counting the number of primes $p\le n$ such that $p\equiv 1\Mod{q^j}$ for each such $j$.  Thus,
\[
\nu_q\Big( \prod_{p\le n} (p-1) \Big) = \sum_{1\le j\le J} \pi(n;q^j,1) ~+ \sum_{J+1\le j\le \log_q n} \!\!\! \pi(n;q^j,1).
\]
Let $S_1$ and $S_2$ denote the latter two sums, respectively.  For each $1\le j\le J$, take note that $q^j\le q^J \le (\log n)^A$.  Also, note that
$J+1 > A\log_q(\log n)$, which implies
\[
\frac{q}{q^J (q-1)^2} = \frac{q^2}{q^{J+1}(q-1)^2} \ll \frac{1}{q^{J+1}} < \frac{1}{(\log n)^A}.
\] 
Recall the well known fact that $\li(n) \sim n/\log n$.  By Lemma~\ref{siegel}, we have
\begin{align*}
S_1 &= \sum_{1\le j\le J} \frac{\li(n)}{\phi(q^j)} + O\Big( \frac{J n}{e^{c\sqrt{\log n}}} \Big) \\
&=  \frac{q-\frac{q}{q^J}}{(q-1)^2}  \li(n) + O\Big( \frac{n\log n}{e^{c\sqrt{\log n}}} \Big) \\
&=  \frac{q}{(q-1)^2}  \li(n) + O\Big( \frac{n}{(\log n)^{A+1}} \Big)
\end{align*}
where the latter equality holds as $\li(n) \ll n/\log n$ and the note above about $\frac{q}{q^J(q-1)^2}$ imply their product gives the new error term, and the previous error term gets absorbed into the new error term.  Next, since $\pi(n;q^j,1)$ is at most the number of integers $\le n$ divisible by $q^j$, we deduce that $\pi(n;q^j,1) \le \lfloor \frac{n}{q^j} \rfloor$.  Thus, 
\begin{alignat*}{2}
S_2 &\le \sum_{J+1\le j\le \log_q n} \left\lfloor \frac{n}{q^j} \right\rfloor &&\le \sum_{J+1\le j\le \log_q n}\left( \frac{n}{q^j} +1 \right) \\
&< \sum_{j\ge J+1} \frac{n}{q^j} + \log_q n &&\ll \frac{n}{(\log n)^A} 
\end{alignat*}
where the latter equality holds again as $q^{J+1} > (\log n)^A$.  The result follows upon putting together the estimates for $S_1$ and $S_2$.
\end{proof}

Before proceeding to the proof of Theorem~\ref{main}, we note that $m_i!_{S_i} = s_i^{m_i} m_i!$ for each $1\le i\le r$ and $m!_T=(1/2)(2m)!$.

\section{Proof of Theorem~\ref{main} for Euler $\phi$ function}

Assume for large $n$ that a solution exists to $\alpha\,m_1!_{S_1}\cdots m_r!_{S_r}=\phi(n!)$.  Note the case where $s_1=\cdots=s_r=1$ gives the first equation in the statement of the theorem. Without loss of generality, we assume $m_1\le \cdots \le m_r$.  Since $\phi(n!) = n! \prod_{p\le n} \frac{p-1}{p}$, notice that
\[
\frac{n!}{m_1!\cdots m_r!} = \alpha\,s_1^{m_1}\cdots s_r^{m_r} \prod_{p\le n} \frac{p}{p-1} 
\]
Let $\beta=\alpha\,s_1^{m_1}\cdots s_r^{m_r}$.  
Let $L$ denote the fraction on the left-hand side and $R$ denote the fraction on the right-hand side.  
Let $\mathcal{Q} = \prod_{p\le n} (p-1)$ and $d=n-\sum_{1\le i\le r} m_i$. 
By Lemma~\ref{n!exp} we have for any prime $q\le n$ that
\[
\nu_q(L) = \frac{d}{q-1} + O(r\log_q n) + O(r).
\]
Note the second error term can be absorbed into the first error term.  On the other hand, we have
\[
\nu_q(R) = \nu_q(\beta) + 1 - \nu_q(\mathcal{Q}).
\]
Thus, for any prime $q\le n$, we have
\[
\nu_q(\mathcal{Q}) = \frac{-d}{q-1} + \nu_q(\beta) + 1 + O(r\log_q n).
\]
Note $\beta$ is divisible by a fixed number of distinct primes, namely the distinct prime divisors of $\alpha\,s_1\cdots s_r$.  Let $q$ and $q'$ with $q\neq q'$ be the first two primes not dividing~$\beta$ (i.e., $\nu_q(\beta)=\nu_{q'}(\beta)=0$).  Since $\nu_q(\beta)=\nu_{q'}(\beta)=0$ and $q,q',r$ are all fixed, we deduce that 
\[
|(q-1)\nu_q(\mathcal{Q}) - (q'-1)\nu_{q'}(\mathcal{Q}) |= O(\log n).
\]
By Lemma~\ref{nuqQ} with say $A=2$, we have that 
\[
\frac{|q'-q|}{(q-1)(q'-1)} \li(n) = O\Big( \frac{n}{(\log n)^2} \Big)
\]
implying as $q$ and $q'$ are fixed that $\li(n) \ll \frac{n}{(\log n)^2}$.  Yet, it is well known that $\li(n)\sim \frac{n}{\log n}$.  Hence, we have $n$ must be bounded.  It follows that $m_1\le\cdots \le m_r$ are also bounded.  In the $f=\phi$ case, this proves the theorem for the first two equations.  

For the last equation, observe that $\alpha\,m_1!_T\cdots m_r!_T=\phi(n!)$ is equivalent to $\beta\,(2m_1)!\cdots (2m_r)!=\phi(n!)$ where $\beta=\alpha/2^r$.  Since $\beta$ is fixed and we established that the first equation with $f=\phi$ has a finite number of solutions, this equation also has a finite number of solutions.

\section{Proof of Theorem~\ref{main} for number of divisors function $\sigma_0$}
Assume for large $n$ that a solution exists to $\alpha\,m_1!_{S_1}\cdots m_r!_{S_r}=\sigma_0(n!)$, or equivalently, $\beta\,m_1!\cdots m_r! = \sigma_0(n!)$ with $\beta=\alpha\,s_1^{m_1}\cdots s_r^{m_r}$.  Note the case where $s_1=\cdots=s_r=1$ gives the first equation in the statement of the theorem. Without loss of generality, we assume $m_1\le \cdots \le m_r$. 

Note $\beta$ is divisible by a fixed number of distinct primes, namely the distinct prime divisors of $\alpha\,s_1\cdots s_r$.  Let $q$ be the first prime that does not divide $\beta$.  For $p\in I_{q-1}$, note that $\nu_p(n!) = \lfloor n/p \rfloor = q-1$, or equivalently $q=\nu_p(n!)+1$.  Thus, the exponent of $q$ in the prime factorization of $\sigma_0(n!)$ is at least the number of primes in $I_{q-1}$; that is, 
\[
\nu_q\big( \sigma_0(n!) \big) \ge \pi\Big( \frac{n}{q-1} \Big) - \pi\Big( \frac{n}{q} \Big).
\]
Using the logarithmic integral approximation of $\pi(x)$ in the prime number theorem with an appropriate error term, we have that the right-hand side above is $\sim \frac{n}{q(q-1)\log n}$.  Hence,
\[
\nu_q\big( \sigma_0(n!) \big) \ge \frac{n}{2q(q-1)\log n}.
\]
On the other hand, as $q\nmid \beta$,
\[
\nu_q( \beta\,m_1!\cdots m_r! ) = \sum_{1\le i\le r} \nu_q( m_i! ) < \sum_{1\le i\le r} \frac{m_i}{q-1} \le r \frac{m_r}{q-1}.
\]
Since $q$ and $r$ are fixed, we have that $\frac{n}{\log n} < 2qr m_r \ll m_r$. 

Now, $\log \beta = \log \alpha + \sum_{1\le i\le r} m_i\log s_i \ll r\, m_r \approx m_r$, since $\alpha,r,s_1,\ldots,s_r$ are all fixed. In addition, $m_1!\cdots m_r! \ge m_r!$.  Thus, 
\begin{align*}
n \ll m_r \log m_r \sim \log (\beta\,m_r!) &\le \log( \beta\,m_1!\cdots m_r!) \\
&= \log\big( \sigma_0(n!) \big) = \sum_{p\le n} \log \big( \nu_p(n!)+1 \big).
\end{align*}
Let $L=\log n$.  Next, we analyze the latter sum in two parts.  First, as $\nu_p(n!) < n$ for all primes $p$,
\[
\sum_{p\le n/L} \log \big( \nu_p(n!)+1 \big) \le \sum_{p\le n/L} \log n \ll \frac{n}{L\log (n/L)}\log n \ll \frac{n}{\log n}.
\]

Next, we use a version of the Brun-Titchmarsh inequality~\cite{ls} about primes in short intervals that says $\pi(x+y)-\pi(x) \le 2y/\log y$.  If we let $y$ denote the length of the interval $I_\ell$, then the number of primes in $I_\ell$ is $\le \frac{2y}{\log y}$.  Using that $\ell < L =\log n$, we have that the number of primes in $I_\ell$ is $\ll \frac{n}{\ell^2 \log n}$.  Now, for the primes $p\in (n/L,n]$, we use that $(n/L,n]$ can be partitioned into the disjoint union of $I_\ell$ with $1\le \ell < L$.  
For each $p\in I_\ell$, recall that $\nu_p(n!) = \ell$.  Thus,
\[
\sum_{n/L<p\le n} \log \big( \nu_p(n!)+1 \big) = \sum_{1\le \ell <L} \sum_{p\in I_\ell} \log (\ell+1) \ll \frac{n}{\log n} \sum_{\ell \ge 1} \frac{\log(\ell + 1)}{\ell^2} \approx \frac{n}{\log n}. 
\]
Putting together the latter two inequalities, we deduce that $n \ll \frac{n}{\log n}$.  Hence, we have that $n$ must be bounded.  It follows that $m_1\le\cdots \le m_r$ are also bounded.  In the $f=\sigma_0$ case, this proves the theorem for the first two equations.  

For the last equation, observe that $\alpha\,m_1!_T\cdots m_r!_T=\sigma_0(n!)$ is equivalent to $\beta\,(2m_1)!\cdots (2m_r)!=\sigma_0(n!)$ where $\beta=\alpha/2^r$.  Since $\beta$ is fixed and we established that the first equation with $f=\sigma_0$ has a finite number of solutions, this equation also has a finite number of solutions.

\section{Sum of $k$\textsuperscript{th} powers of divisor functions $\sigma_k$ with $k\ge 1$}

Fix an integer $k\ge 1$.  Again, assume for large $n$ that a solution exists to $\alpha\,m_1!_{S_1}\cdots m_r!_{S_r}=\sigma_k(n!)$, or equivalently, $\beta\,m_1!\cdots m_r! = \sigma_k(n!)$ with again $\beta=\alpha\,s_1^{m_1}\cdots s_r^{m_r}$.  Note the case where $s_1=\cdots=s_r=1$ gives the first equation in the statement of the theorem. Without loss of generality, we assume $m_1\le \cdots \le m_r$. 

Here we apply a result of Stewart~\cite{stew} that implies the greatest prime factor of $2^N-1$ is 
\[
>N\exp\Big\{ \frac{\log N}{104 \log\log N} \Big\}
\]
for sufficiently large $N$.  Let $N_2=\nu_2(n!)+1$.  Take note that $2^{kN_2}-1$ is a divisor of $\sigma_k(n!)$. Note $\beta$ is divisible by a fixed number of distinct primes, namely the distinct prime divisors of $\alpha\,s_1\cdots s_r$.  Since the largest prime divisor of $\beta\,m_1!\cdots m_r!$ is at most $m_r$ and $N_2 \ge n/2$, we have
\[
m_r > \frac{kn}{2}\exp\Big\{ \frac{\log (kn/2)}{104 \log\log (kn/2)} \Big\}.
\]
Note this implies that $\log m_r \gg \log n$.  From $\log N! \sim N\log N$, we find that
\[
\log m_r! \sim m_r \log m_r \gg n (\log n) \exp\Big\{ \frac{\log (kn/2)}{104 \log\log (kn/2)} \Big\}.
\]
Now, $\log \beta = \log \alpha + \sum_{1\le i\le r} m_i\log s_i \ll r\, m_r \approx m_r$, since $\alpha,r,s_1,\ldots,s_r$ are all fixed. In addition, $m_1!\cdots m_r! \ge m_r!$.  Thus, $\log (\beta m_r!) \sim m_r\log m_r \sim \log m_r!$.  
Note $\sigma_k(N) \le \sigma_1(N)^k$. From the well known fact that $\sigma_1(x)\ll x\log\log x$ and again that $\log N! \sim N\log N$, we have that $\log \sigma_1(n!) \sim \log n! \sim n\log n$.  Since $k$ is fixed,  $\log\sigma_k(n!) \le k\log\sigma_1(n!) \ll n\log n$. Thus, 
\begin{align*}
n (\log n) \exp\Big\{ \frac{\log (kn/2)}{104 \log\log (kn/2)} \Big\} &\ll \log m_r! \sim \log (\beta m_r!) \\
&\le \log(\beta m_1!\cdots m_r!) = \log \sigma_k(n!) \ll n \log n
\end{align*}
which implies that $n$ is bounded.  It follows that $m_1\le\cdots \le m_r$ are also bounded.  In the $f=\sigma_k$ with $k\ge 1$ case, this proves the theorem for the first two equations.  

For the last equation, observe that $\alpha\,m_1!_T\cdots m_r!_T=\sigma_k(n!)$ is equivalent to $\beta\,(2m_1)!\cdots (2m_r)!=\sigma_k(n!)$ where $\beta=\alpha/2^r$.  Since $\beta$ is fixed and we established that the first equation with $f=\sigma_k$ has a finite number of solutions, this equation also has a finite number of solutions.

\vspace{0.5cm}
\noindent
{\tiny DEPARTMENT OF MATHEMATICS, UNIVERSITY OF FINDLAY, OHIO 45840, USA.}\\
E-mail adress: baczkowski@findlay.edu\\
{\tiny HOCHSCHULE FRESENIUS UNIVERSITY OF APPLIED SCIENCES 40476 D\"USSELDORF, GERMANY.}\\
E-mail adress: sasa.novakovic@hs-fresenius.de\\
\noindent
{\tiny MATHEMATISCHES INSTITUT, HEINRICH--HEINE--UNIVERSIT\"AT 40225 D\"USSELDORF, GERMANY.}\\
E-mail adress: novakovic@math.uni-duesseldorf.de

\end{document}